\newtheorem{theorem}{Theorem}[section]
\newtheorem{definition}[theorem]{Definition}
\newtheorem{lemma}[theorem]{Lemma}
\newtheorem{corollary}[theorem]{Corollary}
\newtheorem{conjecture}[theorem]{Conjecture}
\newtheorem{problem}[theorem]{Problem}
\newcommand{\comment}[1]{}
\begin{document}

\title{Highly linked tournaments}

\author{\large{Alexey Pokrovskiy} 
\\
\\ Methods for Discrete Structures,
\\ {Freie Universit\"at,} 
\\ Berlin, Germany.
\\ {Email: \texttt{alja123@gmail.com}}
\\ 
\\ \small Keywords: connectivity of tournaments, linkedness, linkage structures.}

\maketitle

\begin{abstract}
A (possibly directed) graph is $k$-linked if for any two disjoint sets of vertices $\{x_1, \dots, x_k\}$ and $\{y_1, \dots, y_k\}$ there are vertex disjoint paths $P_1, \dots, P_k$ such that $P_i$ goes from $x_i$ to~$y_{i}$.  A theorem of Bollob\'as and Thomason
says that every $22k$-connected (undirected) graph is $k$-linked. It is
desirable to obtain analogues for directed graphs as well. Although
Thomassen showed that the Bollob\'as-Thomason Theorem does not hold for
general directed graphs, he proved an analogue of the theorem for tournaments---there is a function $f(k)$ such that every strongly $f(k)$-connected tournament is $k$-linked.
The bound on $f(k)$ was reduced to $O(k \log k)$ by  K\"uhn, Lapinskas, Osthus,
and Patel, who also conjectured that a linear bound should hold. We prove this conjecture, by showing that every strongly $452k$-connected tournament is $k$-linked.
\end{abstract}

\section{Introduction}
A graph is connected if there is a path between any two vertices.
A graph is $k$-connected if it remains connected after the removal of any set of  $(k-1)$-vertices. This could be seen as a notion of how robust the graph is. For example, if the graph represents a communication network, then the connectedness measures how many nodes need to fail before communication becomes impossible.

Similar notions make sense for directed graphs, except in that context we usually want a \emph{directed path} between every pair of vertices. If this holds, we say that the directed graph is \emph{strongly connected}.  A directed graph is strongly $k$-connected it remains strongly connected after the removal of any set of  $(k-1)$-vertices. In this paper, when dealing with connectedness of directed graphs we will always mean strong connectedness.

Connectedness is a fundamental notion in graph theory, and there are countless theorems which involve it. Perhaps the most important of these is Menger's Theorem, which provides an alternative characterization of $k$-connectedness. Menger's Theorem says that a graph is $k$-connected if, and only if, there are $k$ internally vertex-disjoint paths between any pair of vertices.
Menger's Theorem has the following simple corollary: 
\begin{corollary}\label{MengerCorollary}
If $G$ is $k$-connected then for any two disjoint sets of vertices $\{x_1, \dots, x_k\}$ and $\{y_1, \dots, y_k\}$ there are vertex-disjoint paths $P_1, \dots, P_k$ such that $P_i$ goes from $x_i$ to $y_{\sigma(i)}$ for some permutation $\sigma$ of $[k]$.
\end{corollary}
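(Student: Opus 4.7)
The plan is to reduce the statement to a straightforward application of Menger's Theorem (the vertex version for two fixed vertices) by adding two auxiliary vertices to $G$.

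First I would construct an auxiliary graph $G'$ from $G$ by adjoining two new vertices $u, v \notin V(G)$, together with the edges $\{u x_i : 1 \le i \le k\}$ and $\{y_j v : 1 \le j \le k\}$. In $G'$, any $u$--$v$ path must have the form $u,\, x_i,\, \ldots,\, y_j,\, v$ for some (not necessarily distinct in advance) indices $i, j$, using an $x_i$--$y_j$ path through $G$ as its middle portion. The goal becomes to find $k$ internally vertex-disjoint $u$--$v$ paths in $G'$; by Menger's Theorem this is equivalent to showing that no set $S \subseteq V(G') \setminus \{u,v\} = V(G)$ of size at most $k-1$ separates $u$ from $v$ in $G'$.

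The key step is verifying this separation condition. Suppose for contradiction that some $S \subseteq V(G)$ with $|S| \le k-1$ separates $u$ from $v$ in $G'$. Since $|\{x_1,\dots,x_k\}| = k > |S|$ and $|\{y_1,\dots,y_k\}| = k > |S|$, there exist indices $i, j$ with $x_i, y_j \notin S$ (here I use that the two sets $\{x_i\}$ and $\{y_j\}$ are disjoint, though in fact only $x_i \notin S$ and $y_j \notin S$ are needed). If $G \setminus S$ contained an $x_i$--$y_j$ path, prepending the edge $u x_i$ and appending $y_j v$ would yield a $u$--$v$ path in $G' \setminus S$, contradicting the choice of $S$. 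So $S$ separates $x_i$ from $y_j$ in $G$, contradicting the hypothesis that $G$ is $k$-connected (since $|S| \le k-1$ and $x_i, y_j \notin S$).

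Hence Menger's Theorem provides $k$ internally vertex-disjoint $u$--$v$ paths $Q_1, \dots, Q_k$ in $G'$. Each $Q_\ell$ has the form $u, x_{i_\ell}, \ldots, y_{j_\ell}, v$, and internal vertex-disjointness forces the indices $i_1,\dots,i_k$ to be a permutation of $[k]$ and likewise $j_1,\dots,j_k$ to be a permutation of $[k]$. Deleting $u$ and $v$ from each $Q_\ell$ and reindexing yields vertex-disjoint paths $P_1, \dots, P_k$ in $G$ with $P_i$ going from $x_i$ to $y_{\sigma(i)}$ for the induced permutation $\sigma$. The only subtlety worth checking carefully is the separation argument above; everything else is bookkeeping.
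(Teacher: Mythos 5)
Your proposal is correct and follows essentially the same approach as the paper: adjoin two auxiliary vertices joined to $\{x_1,\dots,x_k\}$ and $\{y_1,\dots,y_k\}$ respectively, apply Menger's Theorem to get $k$ internally disjoint paths between them, and strip off the endpoints. The only difference is cosmetic---the paper asserts the auxiliary graph is $k$-connected, while you verify the (slightly weaker, exactly sufficient) condition that no set of at most $k-1$ vertices separates the two new vertices.
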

This corollary is proved by constructing a new graph $H$ from $G$ by adding two vertices $x$ and $y$ such that $x$ is joined to $\{x_1, \dots, x_k\}$ and $y$ is joined to $\{y_1, \dots, y_k\}$. It is easy to see that $H$ is $k$-connected, and so, by Menger's Theorem, has $k$ vertex-disjoint $x$ -- $y$ paths. Removing the vertices $x$ and $y$ produces the required paths $P_1, \dots, P_k$. It is not hard to see that the converse of Corollary~\ref{MengerCorollary} holds for  graphs on at least $2k$ vertices as well.

Notice that in Corollary~\ref{MengerCorollary}, we had no control over where the path $P_i$ starting at $x_i$ ends up---it could end at any of the vertices $y_1, \dots, y_k$. In practice we might want to have control over this. This leads to the notion of $k$-linkedness. A graph is $k$-linked if for any two disjoint sets of vertices $\{x_1, \dots, x_k\}$ and $\{y_1, \dots, y_k\}$ there are vertex disjoint paths $P_1, \dots, P_k$ such that $P_i$ goes from $x_i$ to $y_{i}$. 

Linkedness is a stronger notion than connectedness. 
A natural question is whether a $k$-connected graph must also be $\ell$-linked for some $\ell$ (which may be smaller than $k$). Larman and Mani \cite{LM}, and  Jung \cite{Jung} were the first to show that this is indeed the case---they showed that there is a function $f(k)$ such that every $f(k)$-connected graph is $k$-linked. This result uses a theorem of Mader~\cite{Mader} about the existence of large topological complete minors in graphs with many edges. The first bounds on $f(k)$ were exponential in $k$, but
Bollob\'as and Thomason showed that a linear bound on the connectedness suffices \cite{BT}. 
\begin{theorem}[Bollob\'as and Thomason]
\label{BollobasThomasson}
Every $22 k$-connected graph is $k$-linked.
\end{theorem}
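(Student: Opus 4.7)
My strategy is to reduce $k$-linkedness to the existence of a small, highly connected ``hub'' inside $G$, together with a separate routing step that brings the prescribed endpoints into that hub. In rough terms: find a dense substructure, route $\{x_i,y_i\}_i$ into it, then complete the links inside.

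First I would exploit the fact that $22k$-connectivity forces average degree at least $22k$, so iteratively deleting vertices of small degree yields a subgraph $H$ of $G$ with minimum degree at least $11k$. A Mader-type theorem on topological complete minors in graphs of large minimum degree then produces inside $H$ a subdivision of $K_t$ for some $t$ which is a suitable multiple of $k$ (roughly $t \approx 2k$ plus a constant buffer). The branch vertices $B$ of this subdivision will play the role of the hub, and the edges of $K_t$ correspond to internally vertex-disjoint paths in $G$.

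Second, given $\{x_1,\dots,x_k\}$ and $\{y_1,\dots,y_k\}$, I would build the standard auxiliary graph used in the proof of Corollary~\ref{MengerCorollary}: add a new vertex $z$ adjacent to all the $x_i$ and $y_i$, and a new vertex $w$ adjacent to $B$. The $22k$-connectivity of $G$ guarantees enough connectivity in the auxiliary graph that Menger's theorem produces $2k$ internally vertex-disjoint paths from $\{x_i,y_i\}_{i\le k}$ to distinct branch vertices. Denote by $a_i$ the branch vertex reached from $x_i$ and by $b_i$ the one reached from $y_i$.

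Third, inside the subdivision of $K_t$, any pair of branch vertices is joined by an internally disjoint path, so a priori one could realise any perfect matching between the $a_i$ and $b_i$. The genuine difficulty is that these internal paths may collide with each other and with the routing paths from step two. The main obstacle, and the heart of the Bollobás--Thomason argument, is showing that $22k$-connectivity gives just enough slack to choose the matching paths iteratively (for $i = 1, \dots, k$), at each stage rerouting through still-unused branch vertices so that the constructed path avoids all previously fixed paths. Concatenating the routing paths with the matching paths then yields the required $k$ vertex-disjoint $x_i$-to-$y_i$ paths $P_1,\dots,P_k$. The constant $22$ is calibrated precisely so that the Mader-type extraction in step one and the two vertex-disjoint routing arguments in steps two and three can be executed simultaneously without conflict.
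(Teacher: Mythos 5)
First, note that the paper does not prove Theorem~\ref{BollobasThomasson} at all; it is quoted from Bollob\'as and Thomason \cite{BT} and used as motivation, so there is no in-paper proof to compare against. Judged on its own terms, your proposal has a fatal gap in its first step. You want to extract from a subgraph of minimum degree $11k$ a subdivision of $K_t$ with $t\approx 2k$ branch vertices. No such theorem exists, and none can: the average degree needed to force a topological $K_t$ is of order $t^2$ (this is a theorem of Bollob\'as--Thomason and of Koml\'os--Szemer\'edi, and it is tight --- for instance, graphs resembling dense random graphs of average degree $d$ contain no $TK_t$ with $t$ much larger than $\sqrt{d}$). So average degree $22k$ only guarantees a topological clique on roughly $\sqrt{k}$ branch vertices, far too few to serve as a hub for $2k$ terminals. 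To run your plan you would need connectivity $\Omega(k^2)$, not $22k$; the constant $22$ cannot be ``calibrated'' to fix this, because the obstruction is a different order of magnitude. This is precisely the point the paper's introduction is making: the topological-minor route of Larman--Mani \cite{LM} and Jung \cite{Jung}, via Mader's theorem \cite{Mader}, is what produced the early superlinear bounds on $f(k)$, and the whole contribution of \cite{BT} was to \emph{avoid} extracting a topological complete minor.

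The actual Bollob\'as--Thomason argument keeps your overall skeleton (a hub, Menger routing of the $2k$ terminals into it, and completion inside) but replaces the topological clique by a weaker object that genuinely is available at linear average degree: a suitably dense subgraph (obtained via Mader's theorem on highly connected subgraphs of dense graphs), inside which the linking is completed not by pre-existing internally disjoint subdivision paths but by an iterative augmentation/rerouting argument exploiting the density. Your steps two and three are a reasonable sketch of the classical Larman--Mani scheme, but as written they inherit the quadratic loss from step one, and the ``iterative rerouting'' in step three is asserted rather than argued --- which matters, since that rerouting is where the real work lies once the hub is no longer a topological clique. As a proof of the stated theorem with the constant $22$, the proposal does not go through.
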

The constant $22$ has since been reduced to $10$ by Thomas and Wollan \cite{TW}.

Much of the above discussion holds true for directed graphs as well (when talking about \emph{strong} $k$-connectedness and \emph{directed} paths). Menger's Theorem remains true, as does Corollary~\ref{MengerCorollary}. A directed graph is $k$-linked if for two disjoint sets of vertices $\{x_1, \dots, x_k\}$ and $\{y_1, \dots, y_k\}$ there are vertex disjoint directed paths $P_1, \dots, P_k$ such that $P_i$ goes from $x_i$ to~$y_{i}$. Somewhat surprisingly there is no function $f(k)$ such that every strongly $f(k)$-connected directed graph is $k$-linked. Indeed Thomassen constructed directed graphs of arbitrarily high connectedness which are not even $2$-linked \cite{ThomassenLinkedDigraphs}. Thus, there is a real difference between the directed and undirected cases. 
For tournaments however the situation is better (A tournament is a directed graph which has exactly one directed edge between any two vertices). There Thomassen showed that there is a constant $C$, such that every $Ck!$-connected tournament is $k$-linked \cite{ThomassenLinkedTournaments}. K\"uhn, Lapinskas, Osthus, and Patel improved the bound on the connectivity to $10^4k\log k$.
\begin{theorem}[K\"uhn, Lapinskas, Osthus, and Patel, \cite{KLOP}]\label{LinkingTheoremKLOP}
All strongly $10^4k\log k$-connected tournaments are $k$-linked.
\end{theorem}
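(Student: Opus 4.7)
The plan is to combine Menger's theorem with a sorting-network style rerouting that exploits the tournament structure. Since the tournament $T$ is strongly $k$-connected, Corollary~\ref{MengerCorollary} supplies vertex-disjoint directed paths $P_1,\ldots,P_k$ with $P_i$ going from $x_i$ to $y_{\sigma(i)}$ for some permutation $\sigma$ of $[k]$. The task then reduces to modifying this linkage so that $\sigma$ becomes the identity while preserving vertex-disjointness.

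My key reduction is to a \emph{pairwise swap}: given two of the current paths whose endpoints are ``crossed'' in $\sigma$, replace them with two new vertex-disjoint paths in which the endpoints are uncrossed, without touching the other $P_\ell$. If each swap can be performed at cost $c$ fresh vertices from a preallocated reservoir $R \subseteq V(T)$, then I drive $\sigma$ to the identity by following the schedule of a sorting network of size $O(k \log k)$ and depth $O(\log k)$; each comparator of the network corresponds to at most one swap. This is how the factor of $\log k$ enters the final bound.

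To perform a single swap, I would choose internal cut vertices $v \in P_i$ and $w \in P_j$ and splice the relevant tails together through $R$. Two facts make this feasible in a tournament. First, a classical result (due to Bang-Jensen) says that every strongly $c_0$-connected tournament is $2$-linked for some absolute constant $c_0$; applying this inside the ``residual tournament'' obtained by deleting the interiors of the other paths and the already-used reserve vertices gives the pair of short splicing paths. Second, because each swap removes only $O(1)$ vertices from $R$ and there are $O(k \log k)$ swaps in total, the residual tournament remains strongly $c_0$-connected throughout, provided the initial connectivity is a large enough multiple of $k \log k$.

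The principal obstacle is organising the reservoir $R$ so that the $O(k \log k)$ swaps do not interfere with each other or with the active paths. I would handle this by assigning each comparator in the sorting network its own disjoint chunk of reserve vertices, stratified by depth level, so that distinct swaps use disjoint rerouting material. The delicate points are verifying the $2$-linkedness base case remains available after each deletion and ensuring that admissible cut vertices $v, w$ can always be chosen on the interiors of the current $P_i, P_j$ (so that the tails to be spliced are non-trivial). Once this bookkeeping is done, the total reserve needed is $O(k \log k)$, which translates into a connectivity requirement of $10^4 k \log k$ after absorbing constants from Bang-Jensen's lemma and from the initial Menger linkage.
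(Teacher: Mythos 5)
The paper does not actually prove Theorem~\ref{LinkingTheoremKLOP}: it is quoted from \cite{KLOP}, and the only description given is that the proof ``builds a small sorting network inside the tournament, which is combined with the directed version of Corollary~\ref{MengerCorollary}.'' Your high-level plan --- Menger linkage first, then an AKS sorting network of size $O(k\log k)$ whose comparators are realised by pairwise swaps --- is the same general strategy, so in spirit you are following the cited argument rather than inventing a new one.

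However, as written your argument has two genuine gaps. First, the claim that ``the residual tournament remains strongly $c_0$-connected throughout, provided the initial connectivity is a large enough multiple of $k\log k$'' does not follow: the set you delete before invoking Bang-Jensen's theorem is the union of the interiors of the other current paths, and Menger gives no upper bound on the lengths of these paths --- they may cover almost all of $V(T)$, in which case deleting them can destroy strong connectivity entirely, no matter how large the initial connectivity is as a function of $k$. Second, the accounting ``each swap removes only $O(1)$ vertices from $R$'' is unjustified for the same reason: Bang-Jensen's theorem produces two disjoint paths with no control on their length, so a single swap can consume an unbounded number of vertices, lengthen the current linkage, and invalidate the preallocated chunks for later comparators. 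To repair this one has to reverse the order of construction: first build every wire and comparator of the network out of explicitly bounded-size gadgets (in \cite{KLOP} these come from small dominating-set-type structures, analogous to the sets $D^\pm_i$ used in Section~2 of this paper), and only then use Menger/Corollary~\ref{MengerCorollary} to route the $x_i$ into the inputs and the outputs to the $y_i$. Without that reorganisation the induction over comparators does not close.
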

This theorem is proved using a beautiful construction utilizing the asymptotically optimal sorting networks of Ajtai, Koml\'os, and Szemer\'edi \cite{AKS}. The proof is based on building a small sorting network inside the tournament, which is combined with the directed version of Corollary~\ref{MengerCorollary} in order to reorder the endpoints of the paths so that $P_i$ goes from $x_i$ to $y_i$. We refer to \cite{KLOP} for details.

Since sorting networks on $k$ inputs require size at least $k\log k$, it is unlikely that this approach can give a $o(k\log k)$ bound in Theorem~\ref{LinkingTheoremKLOP}.
Nevertheless, K\"uhn, Lapinskas, Osthus, and Patel conjectured that a linear bound should be possible.
\begin{conjecture}[K\"uhn, Lapinskas, Osthus, and Patel, \cite{KLOP}]\label{LinkingConjecture}
There is a constant $C$ such that every strongly $Ck$-connected tournament is $k$-linked.
\end{conjecture}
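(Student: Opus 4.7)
The plan is to reduce the $k$-linking problem in the given tournament $T$ to a much simpler linking problem inside a small, highly structured ``linkage substructure'' of $T$, thereby sidestepping the $\log k$ overhead that is inherent in any sorting-network approach such as the one used for Theorem~\ref{LinkingTheoremKLOP}. The central object would be a \emph{linkage structure} $Z\subseteq V(T)$ of size $\Theta(k)$ on which linking is essentially trivial: concretely, a transitive (or almost transitive) subtournament $Z$ with vertices linearly ordered $z_1\prec z_2\prec\cdots$, together with strong connectivity between $Z$ and the rest of $T$. Given such a $Z$, the aim is to route each $x_i$ by a directed path $Q_i$ to some $a_i\in Z$, and each $y_i$ to be reached from some $b_i\in Z$ by a directed path $R_i$, with the $Q_i,R_i$ pairwise disjoint, the $a_i,b_i$ all distinct, and $a_i\prec b_i$ for every $i$; the direct edge $a_i\to b_i$ inside $Z$ then completes the path $P_i$ from $x_i$ to $y_i$, with no sorting required.

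First I would establish a structural lemma of the form: every strongly $Ck$-connected tournament contains a linkage structure $Z$ of size $\Theta(k)$ whose vertices are transitively ordered, and such that the connectivity between $V(T)\setminus Z$ and $Z$ remains at least $2k$ (say, many internally disjoint paths from any vertex to $Z$ and vice versa). In a highly connected tournament one can always locate many vertices of simultaneously high in-degree and out-degree, and by iteratively extracting such ``dominant'' vertices---or by exploiting a median order of $T$---one obtains a large transitive subtournament whose external connectivity is controlled. This step is essentially a combinatorial extraction and, while delicate, should cost only a constant factor in the connectivity.

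Second, I would set up the routing via the directed form of Menger's theorem (Corollary~\ref{MengerCorollary}). The hypothesis that $T$ is strongly $452k$-connected supplies, after removing a small $Z$, ample disjoint directed paths between $\{x_1,\dots,x_k\}$ and $Z$ and between $Z$ and $\{y_1,\dots,y_k\}$, with some freedom in which vertices of $Z$ the paths hit. The subtle point is to use this freedom to arrange that the resulting $a_i,b_i$ are correctly ordered along $Z$. One handle on this is to make $|Z|$ sufficiently larger than $2k$ so that the $a_i,b_i$ occupy only a small fraction of $Z$, and then to resolve any remaining inversions either by rerouting a single $Q_i$ or $R_i$ onto a different target in $Z$, or by inserting a short detour inside $Z$ along a forward chain that dodges the already-used internal vertices.

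The main obstacle I anticipate is precisely this coordination between routing and ordering. Unlike in the KLOP argument, I cannot afford to run a size-$O(k\log k)$ sorting network on the endpoints; instead, the linkage structure itself must be rich enough that local adjustments to the Menger routing suffice to realise the identity pairing. The constant $452$ should arise as the product of the constant factors absorbed in (i) extracting the linkage structure, (ii) Menger-routing $x_i$ and $y_i$ into and out of it, and (iii) performing the final local adjustments that enforce $a_i\prec b_i$. Keeping all three of these within small constant multiples of $k$---and in particular constructing a linkage structure that simultaneously has transitive order, linear size, and large external connectivity to $T$---is where I expect the bulk of the technical work to lie.
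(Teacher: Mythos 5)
There is a genuine gap, and it sits exactly where you locate ``the bulk of the technical work'': the coordination between routing and ordering. Your plan routes $x_i$ into $Z$ and out of $Z$ to $y_i$ via Menger, and then hopes to enforce $a_i\prec b_i$ with the identity pairing by ``rerouting a single $Q_i$'' or ``inserting a short detour.'' But Corollary~\ref{MengerCorollary} gives no control whatsoever over the induced permutation: the paths from $\{x_1,\dots,x_k\}$ into $Z$ and from $Z$ to $\{y_1,\dots,y_k\}$ may realise an arbitrary pairing, e.g.\ the full reversal, so in the worst case $\Theta(k)$ simultaneous corrections are needed and there is no mechanism for making them; fixing one inversion by rerouting can create others, and the disjointness of all $2k$ paths must be preserved throughout. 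Making $|Z|$ larger than $2k$ does not help, because the problem is not a shortage of room in $Z$ but the absence of any handle on \emph{which} terminal each Menger path serves. This unresolved step is precisely the content of the theorem, so the proposal as it stands is a reformulation of the problem rather than a proof.

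The paper closes this gap with two ideas that your sketch does not contain. First, instead of a single transitive $Z$ plus local fixes, Lemma~\ref{LargeLinkage} shows that \emph{every} tournament on $n$ vertices contains two disjoint sets of size $m\le n/11$ between which \emph{every} permutation can be realised by vertex-disjoint paths of length at most $3$ (taking the $m$ largest out-degree and $m$ largest in-degree vertices and analysing a maximum matching between their neighbourhoods). This is the sorting-free substitute for the AKS network: the permutation produced by Menger is simply absorbed, not corrected. Second, the arrival and departure points are not left to Menger at all: for each $i$ a dedicated two-vertex partial greedy in-dominating set $D^-_i$ is reserved, and Lemma~\ref{GreedyDominatingSet} guarantees that $x_i$ can reach the \emph{prescribed} head of $D^-_i$ by a path of bounded length (after at most two preliminary steps to escape the undominated set $E^-_i$); symmetrically each $y_i$ is reached from the prescribed tail of $D^+_i$. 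With the endpoints thus pinned down deterministically, Menger is used only in the middle, between two ``any-permutation'' sets, where its lack of control is harmless. Without analogues of both ingredients your approach cannot be completed.
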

There has also been some work for small $k$. Bang-Jensen showed that every $5$-connected tournament is $2$-linked \cite{BangJensen}. Here the value ``$5$'' is optimal.

The main result of this paper is a proof of Conjecture~\ref{LinkingConjecture}.
\begin{theorem}\label{LinkingTheorem}
Every strongly $452k$-connected tournament is $k$-linked.
\end{theorem}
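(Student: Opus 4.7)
My plan is to combine the directed version of Corollary~\ref{MengerCorollary} with a linear-size ``endpoint-correction'' construction that avoids sorting networks entirely. First, I would invoke the directed Menger-type corollary inside the strongly $452k$-connected tournament $T$ to extract $k$ vertex-disjoint directed paths $P_1,\dots,P_k$ with $P_i$ going from $x_i$ to $y_{\sigma(i)}$ for some permutation~$\sigma$ of $[k]$. The remaining task is to modify these paths so that $\sigma$ becomes the identity, while preserving vertex-disjointness and using only the substantial residual connectivity of $T$ after the initial paths are removed.

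Rather than building a universal sorting network (which costs $\Omega(k\log k)$ vertices and is the reason Theorem~\ref{LinkingTheoremKLOP} carries a $\log k$ factor), I would decompose $\sigma$ into its disjoint cycles and correct each cycle independently with a ``swap-gadget'' whose vertex cost scales linearly in the cycle length. Concretely, for each cycle $(i_1,i_2,\dots,i_\ell)$ of $\sigma$, I would use the fact that the ambient tournament (after removing previously used paths and gadgets) is still highly connected to find $O(\ell)$ fresh vertices together with short detours that allow the incoming path from $x_{i_j}$, currently ending at $y_{i_{j+1}}$, to be cut and spliced onto the path segment currently terminating at $y_{i_j}$. Since the cycles partition $[k]$, summing the gadget sizes across cycles costs $O(k)$ vertices in total, matching the linear conjectured bound.

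The central technical obstacle is constructing such a swap-gadget of linear size inside an arbitrary tournament. In undirected graphs the analogous problem is solved by Mader-type topological minors and is the core of Theorem~\ref{BollobasThomasson}, but in the directed setting the orientations of all the splicing edges must cooperate. The natural remedy is to exploit tournament completeness: any two vertices have an edge between them, so by repeated pigeonholing one can locate vertices with a large out-neighborhood into one path and a large in-neighborhood from another, providing the oriented splicing edges needed to ``rotate'' a cycle in one step. Chaining this observation along each cycle of $\sigma$, and doing so while keeping track of residual connectivity so that later gadgets can still be built, is where the main work of the proof lies; compounding the connectivity losses across the initial path-finding, gadget embedding, and splicing phases accounts for the multiplicative constant $452$.
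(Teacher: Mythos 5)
Your plan has a genuine gap, and it sits exactly where you locate ``the main work of the proof'': the swap-gadget is never constructed, and the order of operations you propose makes it unconstructible. You first extract $k$ Menger paths $P_1,\dots,P_k$ and only afterwards look for fresh vertices and detours in ``the ambient tournament after removing previously used paths.'' But Corollary~\ref{MengerCorollary} gives no control over the length of the $P_i$; collectively they may cover almost all of $V(T)$, so the residual tournament can have essentially no connectivity left, and no pigeonholing argument can then supply the oriented splicing edges. Moreover, even granting residual connectivity, a single directed ``swap'' is already the full difficulty of $2$-linkedness (which by Bang-Jensen needs $5$-connectedness and a real argument), and Thomassen's examples show that in general digraphs no amount of connectivity buys such a gadget --- so the tournament structure must enter through a concrete construction, not through the observation that every pair of vertices spans an edge. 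As written, the third paragraph of your proposal restates the theorem's difficulty rather than resolving it, and the constant $452$ is not derived from anything.

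The paper's proof inverts your order of operations and replaces the per-cycle gadgets with one global, permutation-universal structure built \emph{before} any long paths are routed. Using only connectivity, it constructs $O(k)$ partial greedy in- and out-dominating sets of size $2$ (Lemma~\ref{GreedyDominatingSet} guarantees that the few vertices a set fails to dominate have large out- or in-degree, so every terminal can still enter or exit some set after one extra step), and applies Lemma~\ref{LargeLinkage} to the $55k$ heads and the $55k$ tails to obtain two $5k$-element sets that can be internally linked according to \emph{any} bijection via short paths. Only then does it invoke Menger to find $5k$ disjoint paths between these two small sets, discarding the at most $4k$ that collide with the terminals' neighbours. Every $x_i$--$y_i$ path is routed through this fixed $O(k)$-vertex structure, so the ``sorting'' happens inside Lemma~\ref{LargeLinkage} rather than by cycle-by-cycle surgery on pre-existing paths. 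If you want to rescue your approach, the lesson is that the correction device must be reserved in advance and must be universal for all permutations simultaneously; that is precisely what the linkage structure provides and what your cycle-local splicing cannot.
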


The above theorem is proved using the method of ``linkage structures in tournaments'' recently introduced in~\cite{KLOP} and~\cite{KOT}. Informally, a linkage structure $L$ in a tournament $T$, is a small subset of $V(T)$ with the property that for many pairs of vertices $x,y$ outside $L$, there is a path from $x$ to $y$ most of whose vertices are contained in $L$. Such structures can be found in highly connected tournaments, and they have various applications such as finding Hamiltonian cycles~\cite{KLOP, PokrovskiyHamiltonian} or partitioning tournaments into highly connected subgraphs~\cite{KOT}.
Linkage structures were introduced in the same paper where Conjecture~\ref{LinkingConjecture} was made. However in the past they were constructed \emph{using Theorem~\ref{LinkingTheoremKLOP}} to first show that a tournament is highly linked. In our paper the perspective is different---the linkage structures are built using only connectedness, and then linkedness follows as a corollary of the presence of the linkage structures.

It would be interesting to reduce the constant $452$ in Theorem~\ref{LinkingTheorem}. 
It is not hard to find minor improvements to our proof in Section 2 which improve this constant by a little bit. It is not clear what the correct value of the constant should be, and we are not aware of any non-trivial constructions for large $k$.
In view of the Bollob\'as-Thomason Theorem, we pose the following problem.
\begin{problem}
Show that every strongly $22k$-connected tournament is $k$-linked.
\end{problem}

\section{Proof of Theorem~\ref{LinkingTheorem}}
A directed path $P$ is a sequence of vertices $v_1, v_2,\dots, v_k$ in a directed graph such that $v_i v_{i+1}$ is an edge for all $i=1, \dots, k-1$. The vertex $v_1$ is called the \emph{start} of $P$, and $v_k$ the \emph{end} of $P$. The \emph{length} of $P$ is the number of edges it has which is $|P|-1$. The vertices $v_{2}, \dots, v_{k-1}$ are the \emph{internal vertices} of $P$. Two paths are said to be internally disjoint if their internal vertices are distinct.

A tournament $T$ is transitive if for any three vertices $x,y,z\in V(T)$, if $xy$ and $yz$ are both edges, then $xz$ is also an edge. It's easy to see that a tournament is transitive exactly when it has an ordering $(v_1, v_2,\dots, v_k)$ of $V(T)$ such that the edges of $T$ are $\{v_iv_j: i<j\}$. We say that $v_1$ is the \emph{tail} of $T$, and $v_k$ is the \emph{head} of $T$.

The \emph{out-neighbourhood} of a vertex $v$ in a directed graph, denoted $N^+(v)$, is the set of vertices $u$ for which $vu$ is an edge. Similarly, the \emph{in-neighbourhood}, denoted $N^-(v)$, is the set of vertices $u$ for which $uv$ is an edge.
The \emph{out-degree} of $v$ is $d^+(v)=|N^+(v)|$, and the \emph{in-degree} of $v$ is $d^-(v)=|N^-(v)|$. A useful fact is that every tournament, $T$, has a vertex of out-degree at least $(|T|-1)/2$, and a vertex of in-degree at least $(|T|-1)/2$. To see this, observe that since $T$ has $\binom{|T|}2$ edges, its average in and out-degrees are both $(|T|-1)/2$.

We'll need the following lemma which says that in any tournament, we can find two large sets such that there is a linkage between them.
\begin{lemma}\label{LargeLinkage}
Let  $n$ and $m$ be two integers with $m\leq n/11$.  
Every tournament $T$ on $n$ vertices, contains two disjoint sets of vertices $\{x_1, \dots, x_{m}\}$ and $\{y_1, \dots, y_{m}\}$ such that for any permutation $\sigma$ of $[m]$, there are vertex-disjoint paths $P_1, \dots, P_{m}$ such that $P_i$ goes from $x_i$ to~$y_{\sigma(i)}$.
\end{lemma}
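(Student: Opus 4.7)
The approach I would try is to place $X$ and $Y$ at the two ends of a \emph{median order} of $T$, and realize every required path as a short detour through the vertices in the middle of the order. Recall that a median order of $T$ is an ordering $v_1,\ldots,v_n$ of $V(T)$ maximizing the number of forward edges $\{v_iv_j:i<j\}$; its defining local property is that for every $1\le i<j\le n$, $v_i$ dominates at least $\lceil(j-i)/2\rceil$ of the vertices $\{v_{i+1},\ldots,v_j\}$ and dually $v_j$ is dominated by at least $\lceil(j-i)/2\rceil$ of $\{v_i,\ldots,v_{j-1}\}$, since otherwise a local swap of $v_i$ (resp.\ $v_j$) to the opposite end of the sub-interval would strictly increase the forward-edge count. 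Set $x_i := v_i$, $y_j := v_{n-m+j}$, and $M := \{v_{m+1},\ldots,v_{n-m}\}$; then $|X|=|Y|=m$ and $|M|\ge 9m$ by the hypothesis $m\le n/11$.

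Given this setup I would try to realize each $P_i$ as a length-two detour $x_i \to a_i \to y_{\sigma(i)}$ with internal vertex $a_i\in M$. For every pair $(i,j)\in[m]^2$ define the ``candidate set'' $A_{ij} := N^+(x_i)\cap N^-(y_j)\cap M$. If one can establish the uniform lower bound $|A_{ij}|\ge m$ for every $(i,j)$, then for any permutation $\sigma$ the family $\{A_{i,\sigma(i)}\}_{i=1}^{m}$ trivially satisfies Hall's condition---any $|S|$ of these sets union to a set of size at least $m\ge|S|$---so a system of distinct representatives $a_1,\ldots,a_m\in M$ exists and produces the required pairwise disjoint length-two paths $x_i\to a_i\to y_{\sigma(i)}$.

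The main obstacle is precisely to establish $|A_{ij}|\ge m$. The median-order property, after subtracting the at most $2m-1$ out-neighbours of $x_i$ that might lie inside $X\cup Y$, yields $|N^+(x_i)\cap M|\ge (n+i)/2 - 2m \ge 7m/2$, and symmetrically $|N^-(y_j)\cap M|\ge 7m/2$. But in $M$, which only has size about $9m$, the trivial intersection bound $|A\cap B|\ge |A|+|B|-|M|$ returns a negative number, so this length-two approach is not yet closed. To close the gap I would either (a) sharpen the median-order estimate by a more delicate interval-by-interval analysis---for instance isolating the out-edges of $x_i$ on short prefixes of $M$, where the median-order property is stronger, and stitching them together---to gain an additional $\Theta(m)$ lower-order term; or, more robustly, (b) broaden the search to length-three detours $x_i\to a\to b\to y_{\sigma(i)}$, where the pool of admissible pairs $(a,b)\in M^2$ is much larger and the corresponding Hall-type routing argument enjoys substantial slack under $n\ge 11m$. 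The careful numerical accounting needed to pin down the constant $11$ and correctly size the partition into $X$, $Y$, and $M$ is the delicate technical step of the proof.
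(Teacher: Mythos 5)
There is a genuine gap, and you have identified it yourself: the entire argument rests on the estimate $|A_{ij}|=|N^+(x_i)\cap N^-(y_j)\cap M|\ge m$, and this is exactly the estimate you cannot prove. It is not merely that the ``trivial intersection bound'' is too weak---the bound genuinely can fail. A vertex of out-degree roughly $n/2$ and a vertex of in-degree roughly $n/2$ can have $N^+(x_i)$ and $N^-(y_j)$ nearly disjoint, so no amount of sharpened median-order bookkeeping (your fix (a)) will force a large common neighbourhood, and length-two detours alone cannot work. Your fix (b), allowing length-three detours, is the right instinct, but as stated it is only a direction, not an argument: you still need a reason why many disjoint length-$\le 3$ routes exist when the common neighbourhood is small, and that reason is the missing idea.

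The paper's proof supplies precisely this via a matching dichotomy. It takes $x_1,\dots,x_m$ of maximum out-degree and $y_1,\dots,y_m$ of maximum in-degree (so $d^+(x_i),d^-(y_j)\ge(n-m)/2$, the same degree guarantee your median-order endpoints give), sets $I_{i,j}=N^+(x_i)\cap N^-(y_j)$, and takes a maximum matching $M_{i,j}$ directed from $N^+(x_i)\setminus N^-(y_j)$ to $N^-(y_j)\setminus N^+(x_i)$. If $|I_{i,j}|+e(M_{i,j})>4m$ for all $i,j$, then each pair $(x_i,y_j)$ is joined by more than $4m$ internally disjoint paths of length at most $3$ (through $I_{i,j}$ or through matching edges), and a greedy selection---each already-built path uses at most $4$ vertices, so after $k<m$ paths at most $4m$ of the $4m+1$ candidates are blocked---produces the $m$ disjoint paths for any $\sigma$. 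Otherwise, for some pair $(i,j)$ both leftover sets $X_{i,j}\setminus V(M_{i,j})$ and $Y_{i,j}\setminus V(M_{i,j})$ have size at least $m$, and by maximality of the matching \emph{every} edge between them goes from the $N^-(y_j)$ side to the $N^+(x_i)$ side; one then abandons the original choice entirely and takes the new $x$'s and $y$'s inside these leftover sets, where all $m$ required paths are single edges. This second branch is structurally unavailable in your setup, because you have committed to $X$ and $Y$ being the prefix and suffix of the median order; the lemma only asks for the \emph{existence} of some good pair of sets, and exploiting that freedom is essential to the proof.
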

\begin{proof}
Let $x_1, \dots, x_{m}$ be a set of $m$ vertices in $T$ of largest out-degrees i.e. any set such that any vertex $u$ outside it satisfies $d^+(u)\leq d^+(x_i)$ for all $i$.
Let $y_1, \dots, y_{m}$ be a set of $m$ vertices in $T$ of largest in-degrees. Since $m\leq n/11$, we can choose $\{x_1, \dots, x_{m}\}$ and $\{y_1, \dots, y_{m}\}$ to be disjoint.

Recall that every tournament $T$ has a vertex of out-degree at least $(|T|-1)/2$. This means that $d^+(x_i)\geq ({n}-m)/2$ for each $i=1, \dots, m$ (since otherwise, there would be a vertex in $T\setminus\{x_1, \dots, x_m\}+x_i$ of out-degree larger than $x_i$, contradicting the choice of $x_i$). Similarly, we obtain  $d^-(y_i)\geq ({n}-m)/2$ for each $i=1, \dots, m$.

For each $i$ and $j\leq m$, let $X_{i,j}=\big(N^+(x_i)+x_i\big)\setminus \big(N^-(y_j)+y_j\big)$, $Y_{i,j}=\big(N^-(y_j)+y_j\big)\setminus \big(N^+(x_i)+x_i\big)$, $I_{i,j}=N^+(x_i)\cap N^-(y_j)$, and $M_{i,j}$ a maximum matching of edges directed from $X_{i,j}$ to $Y_{i,j}$.

Notice that we have 
$$|X_{i,j}\setminus V(M_{i,j})|=|N^+(x_i)+x_i-y_i|-|I_{i,j}|-e(M_{i,j})\geq \frac{1}{2}({n}-m)-|I_{i,j}|-e(M_{i,j}).$$
Similarly, we obtain $|Y_{i,j}\setminus V(M_{i,j})| \geq \frac{1}{2}({n}-m)-|I_{i,j}|-e(M_{i,j})$.

Since $M$ is maximal, all the edges between $X_{i,j}\setminus V(M_{i,j})$ and $Y_{i,j}\setminus V(M_{i,j})$ go from $Y_{i,j}$ to~$X_{i,j}$. Therefore, if  $\frac{1}{2}({n}-m)-|I_{i,j}|-e(M_{i,j})\geq m$ holds, then the lemma follows by choosing $x'_1, \dots, x'_m$ to be any $m$ vertices in $Y_{i,j}\setminus V(M_{i,j})$, and $y'_1, \dots, y'_m$ to be any $m$ vertices in $X_{i,j}\setminus V(M_{i,j})$. This  ensures that we can always choose length $1$ paths $P_1, \dots, P_m$ as in the lemma.

Therefore, we can suppose that $\frac{1}{2}({n}-m)-|I_{i,j}|-e(M_{i,j})< m$. Combining this with $m\leq {n}/11$ we obtain that $|I_{i,j}|+e(M_{i,j})>4m$ for every $i$ and $j$.

Notice that for all $i$ and $j$, there are $|I_{i,j}|+e(M_{i,j})\geq 4m+1$ internally vertex disjoint paths of length $\leq 3$ between $x_i$ and $y_j$. This allows us to construct vertex disjoint paths $P_1, \dots, P_m$ each of length $\leq 3$, such that $P_i$ goes from $x_i$ to $y_{\sigma(i)}$ (where $\sigma$ is an arbitrary permutation of $[m]$). Indeed assuming we have constructed the paths $P_1, \dots, P_{k}$, then we have $|V(P_1)\cup\dots\cup V(P_{k})|\leq 4m$, and so one of the $4m+1$ internally vertex disjoint paths between $x_{k+1}$ and $y_{k+1}$ must be disjoint from $V(P_1)\cup\dots\cup V(P_{k})$. We let $P_{k+1}$ be this path, and then repeat this process until we have the required $m$ paths.
\end{proof}

A set of vertices $S$ in-dominates another set $B$, if for every $b\in B\setminus S$, there is some $s\in S$ such that $bs$ is an edge. Notice that by this definition, a set in-dominates itself. A \emph{in-dominating set} in a tournament $T$ is any set $S$ which in-dominates $V(T)$. Notice that by repeatedly pulling out vertices of largest in-degree and their in-neighbourhoods from $T$, we can find an in-dominating set of order at most $\lceil\log_2 |T|\rceil$. For our purposes we'll study sets which are constructed by pulling out some fixed number of vertices by this process.

\begin{definition}
We say that a sequence $(v_1,v_2,\dots, v_k)$ of vertices of a tournament $T$ is a partial greedy in-dominating set if $v_1$ is a maximum in-degree vertex in $T$, and for each $i$, $v_i$ is a maximum in-degree vertex in the subtournament of $T$ on $N^+(v_1)\cap N^+(v_2)\cap \dots \cup N^+(v_{i-1})$.
\end{definition}
Partial greedy out-dominating sets are defined similarly, by letting $v_i$ be a maximum out-degree vertex in $N^-(v_1)\cap N^-(v_2)\cap \dots \cap N^-(v_{i-1})$ at each step.

Notice that every partial greedy in-dominating set is a transitive tournament with head $v_k$ and tail $v_1$.

For small $k$, partial greedy in-dominating sets do not necessarily dominate all the vertices in a tournament. A crucial property of partial greedy in-dominating sets is that the vertices they don't dominate have large out-degree.
The following is a version of a lemma appearing in~\cite{KLOP}.
\begin{lemma}\label{GreedyDominatingSet}
Let $(v_1,v_2,\dots, v_k)$ be a partial greedy in-dominating set in a tournament~$T$. Let $E$ be the set of vertices which are not in-dominated by $A$. Then every $u\in E$ satisfies $d^+(u)\geq 2^{k-1}|E|.$
\end{lemma}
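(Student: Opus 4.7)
The plan is to introduce the nested tournaments $T_i = T[N^+(v_1) \cap \cdots \cap N^+(v_i)]$ for $i=1,\ldots,k$ (with $T_0 = T$), and to set $n_i = |V(T_i)|$. Unpacking the definitions, a vertex $u \notin A := \{v_1,\ldots,v_k\}$ fails to be in-dominated by $A$ precisely when $v_i u$ is an edge for every $i$, i.e.\ when $u \in N^+(v_i)$ for all $i$. Since $v_i \notin N^+(v_i)$, we automatically have $A \cap V(T_k) = \emptyset$, so the set $E$ in the statement is exactly $V(T_k)$ and $|E| = n_k$. The whole lemma then reduces to proving $d^+(u) \geq 2^{k-1} n_k$ for every $u \in V(T_k)$.

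The key idea is to separate the roles of $v_1$ and of $v_2, \ldots, v_k$. I would first use only that $v_1$ has maximum in-degree in $T$: since $d^-(w) + d^+(w) = |T|-1$ for every vertex $w$, maximum in-degree is the same as minimum out-degree, so for every $u \in E$ one immediately gets $d^+(u) \geq d^+(v_1) = |N^+(v_1)| = n_1$. This single application already reduces the lemma to the purely combinatorial inequality $n_1 \geq 2^{k-1} n_k$, which no longer mentions $u$ at all.

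To establish that inequality, I would prove the one-step bound $n_i \geq 2 n_{i+1}$ for $0 \leq i \leq k-1$ (only $i \geq 1$ is actually needed) and iterate. The point is that $v_{i+1}$ is a maximum in-degree vertex of $T_i$, and the paper has already observed that every tournament has a vertex of in-degree at least half (one less than the order), so $d^-_{T_i}(v_{i+1}) \geq (n_i-1)/2$ and therefore $d^+_{T_i}(v_{i+1}) \leq (n_i-1)/2$. But by construction $d^+_{T_i}(v_{i+1}) = |N^+(v_{i+1}) \cap V(T_i)| = n_{i+1}$, yielding $n_i \geq 2 n_{i+1} + 1$, which comfortably implies $n_i \geq 2 n_{i+1}$.

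Iterating from $i=1$ up to $i=k-1$ gives $n_1 \geq 2^{k-1} n_k = 2^{k-1}|E|$, and combining with the first step produces $d^+(u) \geq n_1 \geq 2^{k-1}|E|$ as required. I do not expect a serious obstacle: the only thing one has to be careful about is translating the greedy \emph{in}-degree maximality of each $v_i$ into two separate \emph{out}-degree facts (a single out-degree lower bound coming from $v_1$, and a halving of $n_i$ coming from each $v_{i+1}$), and verifying that the out-neighbourhood of $v_{i+1}$ inside $T_i$ really equals $V(T_{i+1})$.
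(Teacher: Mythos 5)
Your proof is correct and is essentially the paper's argument with the induction unrolled: the paper's base case is exactly your observation that $v_1$'s maximum in-degree forces $d^+(u)\geq n_1$, and its inductive step is exactly your halving bound $n_i\geq 2n_{i+1}$ coming from $v_{i+1}$ having maximum in-degree in $T_i$. Both the identification of $E$ with $V(T_k)$ and the two degree computations match the paper's reasoning, so there is nothing to flag.
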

\begin{proof}
The proof is by induction on $k$. The initial case is when $k=1$. In this case we have $E=N^+(v_1)$ where $v_1$ is a maximum in-degree vertex in $T$. For any $u\in E$, we must have $d^-(u)\leq d^-(v_1)=|T\setminus E- v_1|=|T|-|E|-1$. Therefore we have $d^+(u)=|T\setminus N^-(u)-u|= |T|-d^-(u)-1\geq |E|$ as required.

Now suppose that the lemma holds for $k=k_0$. Let $(v_1, \dots, v_{k_0+1})$ be a partial greedy in-dominating set in $T$, and let $E_0=N^+(v_1)\cap \dots \cap N^+(v_{k_0})$. 
By induction we have $d^+(u)\geq 2^{k_0-1}|E_0|$ for every $u\in E_0$. 
By definition $v_{k_0+1}$ is a maximum in-degree vertex in $E_0$. Let $E=E_0\cap N^+(v_{k_0+1})$ be the set of vertices not in-dominated by $(v_1, \dots, v_{k_0+1})$. 
Since $v_{k_0+1}$ is a maximum in-degree vertex in $E_0$, we have $|N^-(v_{k_0+1})\cap E_0|\geq (|E_0|-1)/2$ which implies $|E|=|E_0|-|(N^-(v_{k_0+1})+v_{k_0+1})\cap E_0|\leq |E_0|/2$.
Combining this with the inductive hypothesis, we obtain $d^+(u)\geq 2^{k_0-1}|E_0|\geq 2^{k_0}|E|$, completing the proof. 
\end{proof}

We are now ready to prove the main result of this paper.
\begin{proof}[Proof of Theorem~\ref{LinkingTheorem}.]
Let $T$ be a strongly $452k$-connected tournament. Notice that this means that all vertices in $T$ have in-degree and out-degree at least $452k$.

Let $x_1, \dots, x_k$ and $y_1, \dots, y_k$ be vertices in $T$ as in the definition of $k$-linkedness.  We will construct vertex disjoint paths from $x_i$ to $y_i$. Let $T'=T\setminus \{x_1, \dots, x_k, y_1, \dots, y_k\}$.

Let $D^-_1$ be a partial greedy in-dominating set in $T'$ on $2$ vertices. Then, for all $i=2, \dots,55k$, let $D^-_i$ be a partial greedy in-dominating set on $2$ vertices in $T'\setminus (D^-_1\cup \dots\cup D^-_{i-1})$.

Similarly, let $D^+_1$ be a partial greedy out-dominating set on $2$ vertices in $T'\setminus (D^-_1\cup \dots\cup D^-_{55k})$. Then, for all $i=2, \dots, 55k$, let $D^+_i$ be a partial greedy out-dominating set  on $2$ vertices in $T'\setminus (D^+_1\cup \dots\cup D^+_{i-1}\cup D^-_1\cup \dots\cup D^-_{55k})$.

Let $X= D^+_1\cup \dots\cup D^+_{55k}\cup D^-_1\cup \dots\cup D^-_{55k}\cup\{x_1,\dots, x_k, y_1, \dots, y_k\}$. 
For each $i$, let $E^-_i$ be the set of vertices in $T\setminus X$ which aren't in-dominated by $D^-_i$, and $E^+_i$ the set of vertices in $T\setminus X$ which aren't out-dominated by $D^+_i$. By  Lemma~\ref{GreedyDominatingSet}, we have $d^+(v)\geq 2|E^-_i|$ for every $v\in E^-_i$, and also  $d^-(v)\geq 2|E^+_i|$ for every $v\in E^+_i$. 

Let $T^-$ be the set of heads of $D^-_1,\dots, D^-_{55k}$, and $T^+$ the set of tails of $D^+_1,\dots, D^+_{55k}$.
 Apply Lemma~\ref{LargeLinkage} to $T^-$ in order to find two subsets $X^-$ and $Y^-$ of order $5k$ of $V(T^-)$, such that for any bijection $f:X^-\to Y^-$, there is a set of $5k$ vertex-disjoint paths in $T^-$ with each path joining $x$ to $f(x)$ for some $x\in X^-$.
 Apply Lemma~\ref{LargeLinkage} to $T^+$ in order to find two subsets $X^+$ and $Y^+$ of order $5k$ of $V(T^+)$, such that for any bijection $f:X^+\to Y^+$, there is a set of $5k$ vertex-disjoint paths in $T^+$ with each path joining $x$ to $f(x)$ for some $x\in X^+$.
 Reorder $(D^-_1, \dots, D^-_{55k})$ so that $X^-$ is the set of heads of $D^-_1, \dots, D^-_{5k}$. 
 Reorder $(D^+_1, \dots, D^+_{55k})$ so that $Y^+$ is the set of tails of $D^+_1, \dots, D^+_{5k}$.
 Notice that since each partial greedy dominating set is on $2$ vertices, we have $|X|\leq 222k$.
By Menger's Theorem, since $T$ is $452$-connected, there is a set of vertex-disjoint paths $Q_1, \dots, Q_{5k}$ in $(T\setminus X)\cup Y^-\cup X^+$ such that each path $Q_i$ starts in $Y^-$ and ends in $X^+$.

Recall that all out-degrees in $T$ are at least $452k$ and $|X|\leq 222k$.
Therefore, for each $i=1, \dots, k$ we can choose an out-neighbour $x'_i$ of $x_i$ which is not in $X$. Similarly for each $i$ we can choose an in-neighbour $y'_i$ of $y_i$ which is not in $X$. In addition we can ensure that $x'_1, \dots, x'_k,y'_1, \dots, y'_k$ are all distinct. Let $X'=X\cup\{x'_1, \dots, x'_k,y'_1, \dots, y'_k\}$.

Notice that each vertex $v\in E^-_i$ satisfies $d^+(v)\geq 2|E^-_i|$ and $2|X'|+4k$. Averaging these, we get $d^+(v)\geq |E^-_i| + |X'|+2k$  and so $v$ has  at least $2k$ out-neighbours outside of $E^-_i \cup X'$. Similarly each $v\in E^+_i$ has at least  $2k$ in-neighbours outside of $E^+_i \cup X'$.
Therefore, for each $i$, we choose $x''_i$ to be either equal to $x'_i$ if $x'_i\not\in E^-_i$ or we choose $x''_i$ to be an out-neighbour of $x'_i$ in $T\setminus (E^-_i\cup X')$. Similarly, for each $i$, we choose $y''_i$ to be either equal to $y'_i$ if $y'_i\not\in E^+_i$ or we choose $y''_i$ to be an in-neighbour of $y'_i$ in $T\setminus (E^+_i\cup X')$. We can also choose  the vertices  $x''_1, \dots, x''_k,y''_1, \dots, y''_k$ so that they are all distinct (since when $x''\neq x'$ and $y''\neq y'$ are always at least $2k$ choices for $x''_i$ and $y''_i$ respectively).

For each $i=1, \dots, k$, let $Q^-_i$ be a path from $x''_i$ to the head of $D^-_i$ whose internal vertices are all in $D^-_i$. The facts that $D^-_i$ is transitive and $x''_i\not\in E^-_i$ ensure that we can do this.  Similarly, for each $i$ let $Q^+_i$ be a path from  the tail of $D^+_i$ to $y''_i$ whose internal vertices are all in $D^+_i$.

Notice that at least $k$ of the paths $Q_1, \dots, Q_{5k}$ are disjoint from $\{x'_1, \dots, x'_k,$ $y'_1, \dots, y'_k,$ $x''_1, \dots, x''_k,$ $y''_1, \dots, y''_k\}$. Let $Q'_1, \dots, Q'_k$ be some choice of such paths.

Since $Q^-_i$ ends in $X^-$ and $Q'_i$ starts in $Y^-$, Lemma~\ref{LargeLinkage} implies that we can choose disjoint paths $P^-_1,\dots, P^-_k$ in $T^-$ such that $P^-_i$ is from the end of $Q^-_i$ to the start of $Q'_i$. 
Similarly we can choose disjoint paths $P^+_1,\dots, P^+_k$ in $T^+$ such that $P^+_i$ is from the end of $Q'_i$ to the start of $Q^+_i$.

Now for each $i$ we join $x_i$ to $x'_i$ to $Q^-_i$ to $P^-_i$ to $Q'_i$ to $P^+_i$ to $Q^+_i$ to $y'_i$ to $y_i$ in order to obtain the required vertex-disjoint paths from the $x_i$s to the $y_i$s.
\end{proof}

\subsection*{Acknowledgement}
The author would like to thank Codru\cb{t} Grosu for suggesting a simplification in the proof of Lemma~\ref{LargeLinkage}.

\bibliography{Linking}
\bibliographystyle{abbrv}
\end{document}